\def\sectionfont{\sffamily\Large\bfseries\boldmath}
\def\subsectionfont{\sffamily\large\bfseries\boldmath}
\def\paragraphfont{\sffamily\normalsize\bfseries\boldmath}
\titleformat*{\section}{\sectionfont}
\titleformat*{\subsection}{\subsectionfont}
\titleformat*{\subsubsection}{\paragraphfont}
\titleformat*{\paragraph}{\paragraphfont}
\titleformat*{\subparagraph}{\paragraphfont}
\setlist{nolistsep}
\newcommand{\mcf}{\mathcal}
\newcommand{\mbb}{\mathbb}
\newcommand{\Nat}{\mbb{N}}
\newcommand{\Prop}{Prop.}
\newcommand{\Def}{Def.}
\newcommand{\Cor}{Cor.}
\newcommand{\Thm}{Thm.}
\newcommand{\Lem}{Lem.}
\DeclareMathOperator*{\argmin}{\operatorname{argmin}}
\DeclareMathOperator{\Id}{Id}
\DeclareMathOperator{\dom}{dom}
\DeclareMathOperator{\cldom}{\mathop{\overline{dom}}}
\DeclareMathOperator{\fix}{Fix}
\DeclareMathOperator{\prox}{Prox}
\DeclareMathOperator{\range}{ran}
\DeclareMathOperator{\clrange}{\mathop{\overline{ran}}}
\newcommand{\vp}{v_\mcf{P}}
\newcommand{\vd}{v_\mcf{D}}
\newcommand{\indicator}[1]{\iota_{#1}}
\newcommand{\project}[1]{P_{#1}}
\newcommand{\normalCone}[1]{N_{#1}}
\newcommand{\polar}[1]{{#1}^\ominus}
\newcommand{\recession}[1]{{\rm rec}\,{#1}}
\newcommand{\support}[1]{\sigma_{#1}}
\newcommand{\closure}[1]{\overline{#1}}
\newcommand{\norm}[1]{\lVert#1\rVert}
\newcommand{\half}{\tfrac{1}{2}}
\newcommand{\innerprod}[2]{\left\langle{#1}\mid{#2}\right\rangle}
\newcommand{\seq}[1]{({#1})_{n\in\Nat}}
\newcommand{\setB}{B}
\newcommand{\setC}{C}
\newcommand{\setD}{D}
\newtheoremstyle{exampstyle}
  {.5\baselineskip}
  {\topsep}
  {}
  {}
  {\bfseries}
  {.}
  {.5em}
  {}
\theoremstyle{exampstyle}
\newtheorem{theorem}{Theorem}[section]
\newtheorem{corollary}[theorem]{Corollary}
\newtheorem{fact}[theorem]{Fact}
\newtheorem{lemma}[theorem]{Lemma}
\newtheorem{proposition}[theorem]{Proposition}
\newtheorem{remark}[theorem]{Remark}
\title{\bfseries\sffamily On the Minimal Displacement Vector of the Douglas-Rachford Operator}
\author{Goran Banjac}
\begin{document}

\maketitle

\begin{abstract}
The Douglas-Rachford algorithm can be represented as the fixed point iteration of a firmly nonexpansive operator.
When the operator has no fixed points, the algorithm's iterates diverge, but the difference between consecutive iterates converges to the so-called minimal displacement vector, which can be used to certify infeasibility of an optimization problem.
In this paper, we establish new properties of the minimal displacement vector, which allow us to generalize some existing results.
\end{abstract}

\section{Introduction}\label{sec:intro}

The Douglas-Rachford algorithm is a powerful method for minimizing the sum of two convex functions that found applications in numerous research areas including signal processing \cite{Combettes:2007}, machine learning \cite{Boyd:2011}, and control \cite{Stathopoulos:2016}.
The asymptotic behavior of the algorithm is well understood when the problem has a solution.
While there exist some results studying feasibility problems involving two convex sets that do not intersect \cite{Bauschke:2016a,Bauschke:2016b,Bauschke:2017}, some recent works also study a more general setting in which the asymptotic behavior of the algorithm is characterized via the so-called \emph{minimal displacement vector}.
The authors in \cite{Bauschke:2016c} characterize this vector in terms of the domains of the functions, whose sum is to be minimized, and their Fenchel conjugates.
This characterization is used in \cite{Ryu:2019} to show that a nonzero minimal displacement vector implies either primal or dual infeasibility of the problem, but there is an additional assumption imposed, which excludes the case of simultaneous primal and dual infeasibility.
The authors in \cite{Bauschke:2020} derive a new convergence result on the algorithm applied to the problem of minimizing a convex function subject to a linear constraint, but they assume that the Fenchel dual problem is feasible.
The analysis in \cite{Banjac:2019,Banjac:2021} covers the case of simultaneous primal and dual infeasibility for a restricted class of problems and shows that the minimal displacement vector can be decomposed as the sum of two orthogonal vectors, one of which is a certificate of primal infeasibility, and the other of dual infeasibility.

In this paper, we show that the orthogonal decomposition of the minimal displacement vector of the Douglas-Rachford operator established in \cite{Banjac:2019,Banjac:2021} holds in the general case as well.
We also show that the algorithm generates certificates of both primal and dual strong infeasibility.
This allows us to recover the results reported in \cite{Banjac:2019,Banjac:2021} as a special case of our analysis.

The paper is organized as follows.
We introduce some definitions and notation in the remainder of Section~\ref{sec:intro}, and some known results on the Douglas-Rachford algorithm in Section~\ref{sec:dra}.
Section~\ref{sec:displacement} presents a decomposition of the minimal displacement vector and new convergence results.
Finally, Section~\ref{sec:quadratic} applies these new results to the problem of minimizing a convex quadratic function subject to convex constraints.

\subsection{Notation}
All definitions introduced here are standard and can be found in \cite{Bauschke:2017:book}, to which we also refer for basic results on convex analysis and monotone operator theory.

Let $\Nat$ denote the set of nonnegative integers, and $\mcf{H}$, $\mcf{H}_1$, $\mcf{H}_2$ be finite-dimensional real Hilbert spaces with inner products $\innerprod{\cdot}{\cdot}$, induced norms $\norm{\,\cdot\,}$, and identity operators $\Id$.
The power set of $\mcf{H}$ is denoted by $2^\mcf{H}$.
Let $\setD$ be a nonempty subset of $\mcf{H}$ with $\closure{\setD}$ being its \emph{closure}.
We denote the \emph{range} of operator $T\colon\setD\to\mcf{H}$ by $\range T$ and define its \emph{fixed point set} as $\fix T = \lbrace x\in\setD \mid Tx=x \rbrace$.
The \emph{kernel} of a linear operator $A$ is denoted by $\ker{A}$.
For a proper lower semicontinuous convex function $f\colon\mcf{H}\to \left]-\infty,+\infty\right]$, we define its:
\begin{align*}
  &\text{\emph{domain}:} && \dom f = \lbrace x\in\mcf{H} \mid f(x) < +\infty \rbrace, \\
  &\text{\emph{Fenchel conjugate}:} && f^* \colon \mcf{H}\to\left]-\infty,+\infty\right] \colon u\mapsto\sup_{x\in\mcf{H}}\left( \innerprod{x}{u} - f(x) \right), \\
  &\text{\emph{recession function}:} && \recession{f} \colon \mcf{H}\to\left]-\infty,+\infty\right] \colon y\mapsto\sup_{x\in\dom{f}} \left( f(x+y)-f(x) \right), \\
  &\text{\emph{proximity operator}:} && \prox_f \colon \mcf{H}\to\mcf{H} \colon x\mapsto\argmin_{y\in\mcf{H}}\left( f(y) + \half \norm{y-x}^2 \right), \\
  &\text{\emph{subdifferential}:} && \partial f \colon \mcf{H}\to2^\mcf{H} \colon x\mapsto\left\lbrace u\in\mcf{H} \mid (\forall y\in\mcf{H}) \: \innerprod{y-x}{u}+f(x)\le f(y) \right\rbrace.
\end{align*}
For a nonempty closed convex set $\setC\subseteq\mcf{H}$, we define its:
\begin{align*}
  &\text{\emph{polar cone}:} && \polar{\setC} = \Big\lbrace u\in\mcf{H} \mid \sup_{x\in\setC}\innerprod{x}{u} \le 0 \Big\rbrace, \\
  &\text{\emph{recession cone}:} && \recession{\setC} = \left\lbrace x\in\mcf{H} \mid (\forall y\in\setC) \: x+y\in\setC \right\rbrace, \\
  &\text{\emph{indicator function}:} && \indicator{\setC} \colon \mcf{H}\to\left[0,+\infty\right] \colon x\mapsto\begin{cases} 0 & x\in\setC \\ +\infty & \text{otherwise,}\end{cases} \\
  &\text{\emph{support function}:} && \support{\setC} \colon \mcf{H}\to\left]-\infty,+\infty\right] \colon u\mapsto\sup_{x\in\setC} \innerprod{x}{u}, \\
  &\text{\emph{projection operator}:} && \project{\setC} \colon \mcf{H}\to\mcf{H} \colon x\mapsto\argmin_{y\in\setC}\,\norm{y-x}, \\
  &\text{\emph{normal cone operator}:} && \normalCone{\setC} \colon \mcf{H}\to2^\mcf{H} \colon x\mapsto \begin{cases} \big\lbrace u\in\mcf{H} \mid \sup\limits_{y\in\setC} \innerprod{y-x}{u}\le 0 \big\rbrace & x\in\setC \\ \emptyset & x\notin\setC. \end{cases}
\end{align*}

\section{Douglas-Rachford Algorithm}\label{sec:dra}

The Douglas-Rachford algorithm can be used to solve composite minimization problems of the form
\begin{equation}\label{eqn:primal}
  \underset{x\in\mcf{H}}{\rm minimize} \quad f(x) + g(x),
  \tag{$\mcf{P}$}
\end{equation}
where $f\colon\mcf{H}\to\left]-\infty,+\infty\right]$ and $g\colon\mcf{H}\to\left]-\infty,+\infty\right]$ are proper lower semicontinuous convex functions.
We say that \eqref{eqn:primal} is feasible if $0\in\dom f - \dom g$ and strongly infeasible if $0\notin\closure{\dom f - \dom g}$.
The Fenchel dual of \eqref{eqn:primal} can be written as
\begin{equation}\label{eqn:dual}
  \underset{\nu\in\mcf{H}}{\rm minimize} \quad f^*(\nu) + g^*(-\nu).
  \tag{$\mcf{D}$}
\end{equation}
Starting from some $s_0\in\mcf{H}$, the Douglas-Rachford algorithm applied to \eqref{eqn:primal} generates the following iterates:
\begin{subequations}\label{eqn:dra}
\begin{align}
  x_n			    &= \prox_f s_n \label{eqn:dra:x} \\
  \nu_n       &= s_n - x_n \label{eqn:dra:nu} \\
  \tilde{x}_n &= \prox_g ( 2x_n - s_n ) \\
  s_{n+1}	    &= s_n + \tilde{x}_n - x_n, \label{eqn:dra:ss}
\end{align}
\end{subequations}
which can be written compactly as $s_n=T^n s_0$, where
\[
  T = \half\Id + \half(2\prox_g-\Id)(2\prox_f-\Id)
\]
is a firmly nonexpansive operator \cite{Lions:1979}.
It is easy to show from \eqref{eqn:dra} that for all $n\in\Nat$
\[
  s_n - T s_n \in (\dom f - \dom g) \cap (\dom f^* + \dom g^*).
\]
Note that $T$ has a fixed point if and only if $0\in\range(\Id-T)$.
The following fact shows that the sequence $\seq{s_n-Ts_n}$ converges regardless of the existence of a fixed point of~$T$.
\begin{fact}\label{fact:min_displ_vec}
Let $s_0\in\mcf{H}$, $s_n=T^n s_0$, and $v\in\mcf{H}$ be the \emph{minimal displacement vector} of $T$ defined as
\[
  v=\project{\clrange(\Id-T)}(0).
\]
Then
\begin{enumerate}[label=(\roman*)]
  \item \label{fact:min_displ_vec:dlts}  $s_n-s_{n+1} \to v$.
  \item \label{fact:min_displ_vec:v_dom} $v=\project{\closure{\dom f-\dom g}\cap\closure{\dom f^*+\dom g^*}}(0)$.
\end{enumerate}
\end{fact}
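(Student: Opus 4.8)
The statement splits into two largely independent parts, and I would handle them separately.

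For \ref{fact:min_displ_vec:dlts} the plan is to treat $T$ purely as a firmly nonexpansive, hence $\half$-averaged, operator and to invoke the classical asymptotics of averaged maps. Writing the iterates as $s_n=T^n s_0$, observe that $s_n-s_{n+1}=s_n-Ts_n=(\Id-T)T^n s_0$, so the claim concerns the displacement sequence $\seq{(\Id-T)T^n s_0}$. In our finite-dimensional setting the set $\clrange(\Id-T)$ is closed and convex --- indeed $\Id-T$ is itself firmly nonexpansive, hence maximally monotone, and the closure of the range of a maximally monotone operator is convex --- so $v=\project{\clrange(\Id-T)}(0)$ is well defined. I would then cite the standard averaged-operator convergence result, namely that $\seq{(\Id-T)T^n x}$ converges to the minimal-norm element of $\clrange(\Id-T)$ for every $x$; applied at $x=s_0$ this is precisely $s_n-s_{n+1}\to v$.

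For \ref{fact:min_displ_vec:v_dom} I would first record the elementary inclusion underlying the membership already noted before the statement. For an arbitrary $s\in\mcf{H}$, one step of \eqref{eqn:dra} produces $x=\prox_f s\in\dom f$ and $\tilde x=\prox_g(2x-s)\in\dom g$, while Moreau's decomposition gives $s-x=\prox_{f^*}s\in\dom f^*$ and $(2x-s)-\tilde x=\prox_{g^*}(2x-s)\in\dom g^*$. Since
\[
  s-Ts=x-\tilde x=(s-x)+\bigl((2x-s)-\tilde x\bigr),
\]
the displacement $s-Ts$ lies simultaneously in $\dom f-\dom g$ and in $\dom f^*+\dom g^*$. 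Hence $\range(\Id-T)\subseteq(\dom f-\dom g)\cap(\dom f^*+\dom g^*)$, and passing to closures (using $\closure{A\cap B}\subseteq\closure{A}\cap\closure{B}$) yields $\clrange(\Id-T)\subseteq C$, where I abbreviate $C:=\closure{\dom f-\dom g}\cap\closure{\dom f^*+\dom g^*}$. In particular $v\in C$, so $\norm{v}\ge\norm{\project{C}(0)}$.

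The reverse inequality is where the real work lies. What remains is to show $\norm{v}\le\norm{\project{C}(0)}$, equivalently that the minimal-norm point of the purely domain-defined set $C$ is genuinely attainable as a limit of true displacements, i.e. $\project{C}(0)\in\clrange(\Id-T)$. This cannot follow from the soft inclusion of the previous step, which only bounds $\norm{v}$ from below; it requires the fine characterization of $\clrange(\Id-T)$ in terms of the domains of $f$, $g$ and their conjugates established in \cite{Bauschke:2016c}, which I would cite. With $\project{C}(0)\in\clrange(\Id-T)$ in hand, minimality of $v$ in $\clrange(\Id-T)$ gives $\norm{v}\le\norm{\project{C}(0)}$; combined with $v\in C$ and the uniqueness of the projection onto the closed convex set $C$, this forces $v=\project{C}(0)$, proving \ref{fact:min_displ_vec:v_dom}. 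I expect this matching of the two minimal-norm elements --- rather than the easy inclusion --- to be the main obstacle.
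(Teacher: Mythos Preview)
Your proposal is correct and aligns with the paper's own treatment: the paper simply cites \cite[\Cor~2.3]{Baillon:1978} for \ref{fact:min_displ_vec:dlts} and \cite[\Cor~6.5]{Bauschke:2016c} for \ref{fact:min_displ_vec:v_dom}, without further argument. Your exposition around these citations --- the averaged-operator viewpoint for \ref{fact:min_displ_vec:dlts} and the explicit easy-direction inclusion plus the acknowledgement that the reverse requires \cite{Bauschke:2016c} for \ref{fact:min_displ_vec:v_dom} --- is accurate and more informative than the paper's bare references, but the underlying approach is the same.
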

\begin{proof}
The first result is \cite[\Cor~2.3]{Baillon:1978} and the second is \cite[\Cor~6.5]{Bauschke:2016c}.
\end{proof}

Since $v$ is defined via the projection onto the set $\clrange(\Id-T)$, which is nonempty closed convex \cite[\Lem~4]{Pazy:1971}, it always exists and must be unique.

\begin{remark}
It is Fact~\ref{fact:min_displ_vec}\ref{fact:min_displ_vec:v_dom}, which relies on \cite{Bauschke:2016c}, that prompted us to work in a finite-dimensional space.
\end{remark}

\section{Minimal Displacement Vector}\label{sec:displacement}

Motivated by the characterization of the minimal displacement vector given in Fact~\ref{fact:min_displ_vec}\ref{fact:min_displ_vec:v_dom} and the decomposition given in \cite[\Prop~2.3]{Bauschke:2020}, we define vectors
\[
  \vp = \project{\closure{\dom f-\dom g}}(0)
  \quad\text{ and }\quad
  \vd = \project{\closure{\dom f^*+\dom g^*}}(0).
\]
Using \Lem~\ref{lem:closures}, we can equivalently characterize these vectors as
\begin{equation}\label{eqn:vp_vd}
  \vp = \project{\closure{\cldom f-\cldom g}}(0)
  \quad\text{ and }\quad
  \vd = \project{\closure{\cldom f^*+\cldom g^*}}(0).
\end{equation}

\subsection{Static Results}\label{subsec:static}

Although it is obvious that nonzero $\vp$ and $\vd$ imply strong infeasibility of \eqref{eqn:primal} and \eqref{eqn:dual}, respectively, we next provide some useful identities.
\begin{proposition}\label{prop:vp_vd_rec_fcn}
Vectors $\vp$ and $\vd$ satisfy the following equalities:
\begin{align*}
  \recession{f^*}(-\vp) + \recession{g^*}(\vp) &= -\norm{\vp}^2 \\
  \recession{f}(-\vd) + \recession{g}(-\vd) &= -\norm{\vd}^2.
\end{align*}
\end{proposition}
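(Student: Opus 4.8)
The plan is to rewrite each recession function as a support function and then reduce both identities to a single projection computation. The crucial tool is the standard correspondence between the recession function of a proper lower semicontinuous convex function and the support function of the domain of its conjugate, namely $\recession{h} = \support{\dom h^*}$ (see \cite{Bauschke:2017:book}). Applying this with $h=f^*$ and $h=g^*$ and using $f^{**}=f$, $g^{**}=g$, the left-hand side of the first identity becomes $\support{\dom f}(-\vp) + \support{\dom g}(\vp)$.

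Next I would merge these two support functions into one. Writing them as suprema and collecting terms gives
\[
  \support{\dom f}(-\vp) + \support{\dom g}(\vp)
  = \sup_{x\in\dom f,\,y\in\dom g}\innerprod{x-y}{-\vp}
  = \support{\dom f - \dom g}(-\vp),
\]
and since a support function is unchanged when its (convex) set is replaced by its closure, this equals $\support{C}(-\vp)$ with $C=\closure{\dom f-\dom g}$. Thus the problem is reduced to evaluating the support function of $C$ at $-\vp$, where by definition $\vp=\project{C}(0)$ is the minimum-norm element of $C$.

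Finally I would compute $\support{C}(-\vp) = -\inf_{z\in C}\innerprod{z}{\vp}$ and invoke the variational characterization of the projection: $\vp=\project{C}(0)$ is equivalent to $\innerprod{z-\vp}{\vp}\ge 0$ for every $z\in C$, i.e.\ $\innerprod{z}{\vp}\ge\norm{\vp}^2$, with equality at $z=\vp\in C$. Hence $\inf_{z\in C}\innerprod{z}{\vp}=\norm{\vp}^2$ and $\support{C}(-\vp)=-\norm{\vp}^2$, which is the first claim. The second identity then follows verbatim after replacing $f,g,\vp$ by $f^*,g^*,\vd$: the same correspondence yields $\recession{f}(-\vd)+\recession{g}(-\vd)=\support{\dom f^*+\dom g^*}(-\vd)=\support{D}(-\vd)$ with $D=\closure{\dom f^*+\dom g^*}$ and $\vd=\project{D}(0)$, and the projection argument gives $-\norm{\vd}^2$.

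I expect the only delicate point to be the recession-to-support translation: one must invoke (or verify) that the paper's $\sup$-based definition of $\recession{h}$ coincides with $\support{\dom h^*}$, and keep careful track of signs when combining the $f$ and $g$ terms. Once the left-hand sides are expressed through $\support{C}$ and $\support{D}$, the conclusion is a one-line consequence of the obtuse-angle inequality for projections, so no further obstacle arises.
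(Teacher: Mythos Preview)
Your proof is correct and follows essentially the same approach as the paper: both reduce the identity to $\support{\closure{\dom f-\dom g}}(-\vp)=-\norm{\vp}^2$ via the correspondence $\recession{h}=\support{\dom h^*}$ together with additivity and closure-invariance of support functions. The only cosmetic difference is order: the paper starts from the projection, rewrites $-\vp\in\normalCone{\closure{\dom f-\dom g}}(\vp)$ as the support-function equality, and then splits into recession terms, whereas you start from the recession terms and compute the support value directly via the obtuse-angle inequality.
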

\begin{proof}
Since proofs of both equalities follow very similar arguments, we only provide a proof for the first.
Using the definition of $\vp$ and \cite[\Prop~6.47]{Bauschke:2017:book}, we have
\[
  -\vp\in\normalCone{\closure{\dom f-\dom g}}(\vp).
\]
Using \cite[\Thm~16.29]{Bauschke:2017:book} and the facts that $\indicator{\setD}^*=\support{\setD}$ and $\partial\indicator{\setD}=\normalCone{\setD}$, the inclusion above is equivalent to
\[
  -\norm{\vp}^2 = \support{\closure{\dom f-\dom g}}(-\vp) = \support{\dom f}(-\vp) + \support{\dom g}(\vp) = \recession{f^*}(-\vp) + \recession{g^*}(\vp),
\]
where the second equality follows from $\support{\closure{\setC+\setD}}=\support{\setC+\setD}=\support{\setC}+\support{\setD}$ and $\support{-\setC}=\support{\setC} \circ (-\Id)$, and the third from \cite[\Prop~13.49]{Bauschke:2017:book}.
\end{proof}

\begin{proposition}\label{prop:vp_vd_v}
The following relations hold between vectors $\vp$, $\vd$, and $v$:
\begin{enumerate}[label=(\roman*)]
  \item\label{prop:vp_vd_v:i}
  $-\vp \in \polar{\left( \recession(\cldom f) \right)} \cap \polar{\left( \recession(-\cldom g) \right)}$.

  \item\label{prop:vp_vd_v:ii}
  $-\vd \in \polar{\left( \recession(\cldom f^*) \right)} \cap \polar{\left( \recession(\cldom g^*) \right)}$.

  \item\label{prop:vp_vd_v:iii}
  $-\vp\in\recession(\cldom f^*)\cap\recession(-\cldom g^*)$.

  \item\label{prop:vp_vd_v:iv}
  $-\vd\in\recession(\cldom f)\cap\recession(\cldom g)$.

  \item\label{prop:vp_vd_v:v}
  $\innerprod{\vp}{\vd} = 0$.

  \item\label{prop:vp_vd_v:vi}
  $\vp + \vd \in \closure{\dom f - \dom g} \cap \closure{\dom f^* + \dom g^*}$.

  \item\label{prop:vp_vd_v:vii}
  $v = \vp + \vd$.
\end{enumerate}
\end{proposition}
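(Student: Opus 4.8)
The plan is to establish the four membership statements (i)--(iv) first, read off the orthogonality (v) from them, and then assemble (vi) and (vii). Two ingredients are used throughout. First, the variational characterization coming from the very definition of $\vp$ and $\vd$ as projections of the origin: since $\vp=\project{\closure{\dom f-\dom g}}(0)$, one has $-\vp\in\normalCone{\closure{\dom f-\dom g}}(\vp)$, equivalently $\innerprod{w}{\vp}\ge\norm{\vp}^2$ for every $w\in\closure{\dom f-\dom g}$, and likewise $\innerprod{w}{\vd}\ge\norm{\vd}^2$ for every $w\in\closure{\dom f^*+\dom g^*}$. Second, Proposition~\ref{prop:vp_vd_rec_fcn}: because the recession function of a proper function never takes the value $-\infty$, the two displayed identities force each of $\recession{f^*}(-\vp)$, $\recession{g^*}(\vp)$, $\recession{f}(-\vd)$, $\recession{g}(-\vd)$ to be finite.

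For (i) and (ii) I would argue directly with the normal-cone inequality, using the characterization \eqref{eqn:vp_vd} in terms of $\cldom f$ and $\cldom g$. Given $x\in\recession(\cldom f)$ and any $a\in\cldom f$, $b\in\cldom g$, the ray $a+tx-b$ lies in $\cldom f-\cldom g$ for all $t\ge0$; substituting it into $\innerprod{\,\cdot\,}{\vp}\ge\norm{\vp}^2$ and letting $t\to\infty$ forces $\innerprod{x}{\vp}\ge0$, i.e. $-\vp\in\polar{(\recession(\cldom f))}$. The same ray argument for directions in $\recession(-\cldom g)$, and for $\vd$ with the sum $\cldom f^*+\cldom g^*$, gives the remaining inclusions of (i) and (ii).

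For (iii) and (iv) I would use finiteness of the recession-function values. From $\recession{f^*}(-\vp)<\infty$ and the definition of $\recession{f^*}$, every $u\in\dom f^*$ satisfies $f^*(u-\vp)\le f^*(u)+\recession{f^*}(-\vp)<\infty$, so $\dom f^*-\vp\subseteq\dom f^*$, whence $-\vp\in\recession(\dom f^*)\subseteq\recession(\cldom f^*)$ (a convex set and its closure having nested recession cones); the analogous computation from $\recession{g^*}(\vp)<\infty$ gives $\vp\in\recession(\cldom g^*)$, i.e. $-\vp\in\recession(-\cldom g^*)$, proving (iii), and the second identity of Proposition~\ref{prop:vp_vd_rec_fcn} proves (iv) the same way. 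Orthogonality (v) is then a pairing argument: combining $-\vp\in\polar{(\recession(\cldom f))}$ from (i) with $-\vd\in\recession(\cldom f)$ from (iv) gives $\innerprod{\vp}{\vd}\le0$, while combining $-\vp\in\polar{(\recession(-\cldom g))}$ with $\vd\in\recession(-\cldom g)$ (the restatement of $-\vd\in\recession(\cldom g)$) gives $\innerprod{\vp}{\vd}\ge0$.

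The delicate step, which I expect to be the main obstacle, is (vi): the naive attempt to exhibit the added vector as a sum of recession directions of the two constituent sets cancels to zero, so one must instead absorb it into a single summand. To place $\vp+\vd$ in $\closure{\dom f^*+\dom g^*}$ I would use $\vp\in\recession(\cldom g^*)$ from (iii): for $p\in\dom f^*$, $q\in\dom g^*$ one has $p+q+\vp=p+(q+\vp)\in\dom f^*+\cldom g^*\subseteq\closure{\dom f^*+\dom g^*}$, and approximating $\vd$ by such sums keeps the limit in the closure. Symmetrically, $-\vd\in\recession(\cldom g)$ from (iv) gives $a-b+\vd=a-(b-\vd)\in\dom f-\cldom g\subseteq\closure{\dom f-\dom g}$, placing $\vp+\vd$ in that closure too. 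Finally (vii) follows from Fact~\ref{fact:min_displ_vec}: for any $w$ in the intersection $\closure{\dom f-\dom g}\cap\closure{\dom f^*+\dom g^*}$, adding the two normal-cone inequalities and invoking $\innerprod{\vp}{\vd}=0$ yields $\innerprod{w}{\vp+\vd}\ge\norm{\vp}^2+\norm{\vd}^2=\norm{\vp+\vd}^2$; since $\vp+\vd$ lies in the intersection by (vi), this is precisely the variational inequality characterizing the projection of the origin onto it, so uniqueness of the projection gives $v=\vp+\vd$.
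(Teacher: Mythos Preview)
Your proof is correct and follows the same overall architecture as the paper: establish (i)--(iv), read off (v) by pairing a polar-cone membership against a recession-cone membership, absorb $\vp$ (resp.\ $\vd$) into a single summand to get (vi), and finish (vii) via the variational characterization of the projection of $0$ onto $\closure{\dom f-\dom g}\cap\closure{\dom f^*+\dom g^*}$.

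The one noteworthy difference is in how you reach (i)--(iv). The paper obtains (i)--(ii) by quoting \cite[Cor.~2.7]{Bauschke:2004} and then derives (iii)--(iv) from (i)--(ii) via \Lem~\ref{lem:rec_dom}, which packages the chain $\polar{(\recession(\cldom f))}=\cldom(\recession{f^*})\subseteq\recession(\cldom f^*)$. You instead give a direct ray argument for (i)--(ii), and for (iii)--(iv) bypass (i)--(ii) entirely: you read finiteness of $\recession{f^*}(-\vp)$, $\recession{g^*}(\vp)$, $\recession{f}(-\vd)$, $\recession{g}(-\vd)$ straight off Proposition~\ref{prop:vp_vd_rec_fcn} and then argue $\dom f^*-\vp\subseteq\dom f^*$, etc. This is the same computation that sits inside the proof of \Lem~\ref{lem:rec_dom}, so you are effectively inlining that lemma while feeding it a different input. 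The upshot is a more self-contained argument that avoids the external citation; the paper's route is terser but leans on the literature.
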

\begin{proof}
\ref{prop:vp_vd_v:i}\&\ref{prop:vp_vd_v:ii}:
Follow from \eqref{eqn:vp_vd} and \cite[\Cor~2.7]{Bauschke:2004}.

\ref{prop:vp_vd_v:iii}\&\ref{prop:vp_vd_v:iv}:
Follow from parts \ref{prop:vp_vd_v:i}\&\ref{prop:vp_vd_v:ii} and \Lem~\ref{lem:rec_dom}.

\ref{prop:vp_vd_v:v}:
Since $-\vp \in \polar{\left( \recession(\cldom f) \right)}$ and $-\vd\in\recession(\cldom f)$, we have $\innerprod{\vp}{\vd} \le 0$.
Also, since $-\vp \in \polar{\left( \recession(-\cldom g) \right)}$ and $-\vd\in\recession(\cldom g)$, we have $\innerprod{\vp}{\vd} \ge 0$.
Therefore, it must be that $\innerprod{\vp}{\vd} = 0$.

\ref{prop:vp_vd_v:vi}:
By \ref{prop:vp_vd_v:iv}, we have $-\vd\in\recession(\cldom g)$, hence
\begin{align*}
  \vp + \vd \in \closure{\cldom f - \cldom g} + \vd = \closure{\cldom f - (\cldom g - \vd)} &\subseteq \closure{\cldom f - \cldom g} \\
  &= \closure{\dom f - \dom g},
\end{align*}
where the last equality follows from \Lem~\ref{lem:closures}.
Similarly, by \ref{prop:vp_vd_v:iii} we have $\vp\in\recession(\cldom g^*)$, hence
\begin{align*}
  \vp + \vd \in \vp + \closure{\cldom f^* + \cldom g^*} = \closure{\cldom f^* + (\cldom g^* + \vp)} &\subseteq \closure{\cldom f^* + \cldom g^*} \\
  &= \closure{\dom f^* + \dom g^*}.
\end{align*}

\ref{prop:vp_vd_v:vii}:
Assuming that $\vp+\vd=0$, the identity follows from Fact~\ref{fact:min_displ_vec}\ref{fact:min_displ_vec:v_dom} and part~\ref{prop:vp_vd_v:vi}.
We next assume that $\vp+\vd \ne 0$.
Using \cite[\Thm~3.16]{Bauschke:2017:book} together with the definitions of $\vp$, $\vd$, and $v$, we have
\begin{align*}
  \innerprod{v-\vp}{-\vp} \le 0 \quad &\Longleftrightarrow\quad \norm{\vp}^2 \le \innerprod{v}{\vp} \\
  \innerprod{v-\vd}{-\vd} \le 0 \quad &\Longleftrightarrow\quad \norm{\vd}^2 \le \innerprod{v}{\vd},
\end{align*}
which together with part~\ref{prop:vp_vd_v:v} implies
\[
  \norm{\vp+\vd}^2 = \norm{\vp}^2 + \norm{\vd}^2 \le \innerprod{v}{\vp+\vd} \le \norm{v} \norm{\vp+\vd}.
\]
Dividing the inequality by $\norm{\vp+\vd} \ne 0$, we get $\norm{\vp+\vd}\le\norm{v}$.
Using part~\ref{prop:vp_vd_v:vi} and the fact that $v$ is the unique element of minimum norm in $\closure{\dom f - \dom g} \cap \closure{\dom f^* + \dom g^*}$, we obtain the result.
\end{proof}

\begin{corollary}\label{cor:vp_vd}
The following relations hold between vectors $v$, $\vp$, and $\vd$:
\begin{enumerate}[label=(\roman*)]
  \item $-\vp = \project{\polar{(\recession(\cldom f))}}(-v)$.
  \item $-\vd = \project{\recession(\cldom f)}(-v)$.
\end{enumerate}
\end{corollary}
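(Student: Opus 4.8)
The plan is to recognize both identities as a single instance of Moreau's conical decomposition. Set $K := \recession(\cldom f)$; since $f$ is proper lower semicontinuous convex, $\cldom f$ is a nonempty closed convex set, so $K$ is a nonempty closed convex cone and $\polar{(\recession(\cldom f))}$ is precisely its polar cone $\polar{K}$. Moreau's decomposition (see, e.g., \cite[\Thm~6.30]{Bauschke:2017:book}) states that every $z\in\mcf{H}$ admits a \emph{unique} decomposition $z = p + q$ with $p\in K$, $q\in\polar{K}$, and $\innerprod{p}{q}=0$, and that this decomposition is realized by $p = \project{K}(z)$ and $q = \project{\polar{K}}(z)$. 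I would apply this with $z = -v$.

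The decomposition is handed to us directly by Proposition~\ref{prop:vp_vd_v}. First, part~\ref{prop:vp_vd_v:vii} gives $v = \vp + \vd$, hence $-v = (-\vd) + (-\vp)$. It then remains only to verify the three defining properties of the Moreau decomposition for the candidate summands $-\vd$ and $-\vp$: the membership $-\vd\in K=\recession(\cldom f)$ is exactly part~\ref{prop:vp_vd_v:iv}; the membership $-\vp\in\polar{K}=\polar{(\recession(\cldom f))}$ is exactly part~\ref{prop:vp_vd_v:i}; and the orthogonality $\innerprod{-\vd}{-\vp}=\innerprod{\vp}{\vd}=0$ is exactly part~\ref{prop:vp_vd_v:v}.

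By the uniqueness asserted in Moreau's theorem, the decomposition $-v=(-\vd)+(-\vp)$ must coincide with $-v = \project{K}(-v) + \project{\polar{K}}(-v)$, which yields $\project{\recession(\cldom f)}(-v) = -\vd$ and $\project{\polar{(\recession(\cldom f))}}(-v) = -\vp$, the desired identities (ii) and (i). There is no genuinely hard computational step: the substantive work has all been front-loaded into Proposition~\ref{prop:vp_vd_v}, and the only points requiring care are the sign bookkeeping (one works with $-v$, $-\vp$, $-\vd$ rather than $v$, $\vp$, $\vd$) and checking that $\recession(\cldom f)$ is a nonempty closed convex cone so that Moreau's theorem is applicable. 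The conceptual crux is simply the observation that the cone membership and orthogonality relations already proved are precisely the hypotheses characterizing the Moreau decomposition, so uniqueness does the rest.
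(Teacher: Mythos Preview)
Your proof is correct and is essentially the same as the paper's: the paper appeals to Proposition~\ref{prop:vp_vd_v} together with \cite[\Cor~6.31]{Bauschke:2017:book}, which is precisely the Moreau conical decomposition characterization you invoke (you cite the neighboring \cite[\Thm~6.30]{Bauschke:2017:book}). Your write-up just makes explicit which parts \ref{prop:vp_vd_v:i}, \ref{prop:vp_vd_v:iv}, \ref{prop:vp_vd_v:v}, \ref{prop:vp_vd_v:vii} of the proposition are used.
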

\begin{proof}
Follows directly from \Prop~\ref{prop:vp_vd_v} and \cite[\Cor~6.31]{Bauschke:2017:book}.
\end{proof}

The authors in \cite{Ryu:2019} have also established connections between recession functions and the minimal displacement vector, but the equalities in \Prop~\ref{prop:vp_vd_rec_fcn} provide a tight characterization of the left-hand sides and improve the bounds given in~\cite{Ryu:2019}.
Also, if problem~\eqref{eqn:primal} is feasible, then $\vp=0$, which according to \Prop~\ref{prop:vp_vd_v}\ref{prop:vp_vd_v:vii} implies $v=\vd$; similarly, if problem~\eqref{eqn:dual} is feasible, then $v=\vp$.
Although these implications were established in \cite{Ryu:2019}, they follow as a special case of our analysis, which is also applicable when both \eqref{eqn:primal} and \eqref{eqn:dual} are infeasible.

\subsection{Dynamic Results}\label{subsec:dynamic}

Fact~\ref{fact:min_displ_vec}\ref{fact:min_displ_vec:dlts} shows that the difference between consecutive iterates of the so-called \emph{governing sequence} $\seq{s_n}$ always converges.
We next show that the same holds for the \emph{shadow sequence} $\seq{x_n}$.

\begin{theorem}\label{thm:limits}
Let $s_0\in\mcf{H}$ and $\seq{x_n,\tilde{x}_n,\nu_n}$ be the sequences generated by~\eqref{eqn:dra}.
Then
\[
  (x_n-x_{n+1},\tilde{x}_n-\tilde{x}_{n+1},\nu_n-\nu_{n+1})\to(\vd,\vd,\vp).
\]
\end{theorem}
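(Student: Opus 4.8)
The plan is to reduce all three limits to the single statement $x_n-x_{n+1}\to\vd$, and then to identify the cluster points of $\seq{x_n-x_{n+1}}$ using the static results of Section~\ref{subsec:static}. First I would record the algebraic identities that follow directly from~\eqref{eqn:dra}. From~\eqref{eqn:dra:ss} we have $\tilde{x}_n=x_n-(s_n-s_{n+1})$, so that $\tilde{x}_n-\tilde{x}_{n+1}=(x_n-x_{n+1})-\big((s_n-s_{n+1})-(s_{n+1}-s_{n+2})\big)$, and the bracketed term tends to $v-v=0$ by Fact~\ref{fact:min_displ_vec}\ref{fact:min_displ_vec:dlts}. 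Likewise $\nu_n-\nu_{n+1}=(s_n-s_{n+1})-(x_n-x_{n+1})$ from~\eqref{eqn:dra:nu}. Hence, once $x_n-x_{n+1}\to\vd$ is established, the second component converges to the same limit $\vd$, while the third converges to $v-\vd=\vp$ by Proposition~\ref{prop:vp_vd_v}\ref{prop:vp_vd_v:vii}. Thus the whole theorem rests on showing $x_n-x_{n+1}\to\vd$.

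Next I would establish boundedness and pass to cluster points. Moreau's identity gives $\nu_n=\prox_{f^*}s_n$, so both $x_n-x_{n+1}$ and $\nu_n-\nu_{n+1}$ are increments of firmly nonexpansive operators evaluated at $s_n$ and $s_{n+1}$. Firm nonexpansiveness of $\prox_f$ yields $\norm{x_n-x_{n+1}}^2\le\innerprod{s_n-s_{n+1}}{x_n-x_{n+1}}$, whence $\norm{x_n-x_{n+1}}\le\norm{s_n-s_{n+1}}$, a bounded sequence by Fact~\ref{fact:min_displ_vec}\ref{fact:min_displ_vec:dlts}. Since $\mcf{H}$ is finite-dimensional, it suffices to show that every cluster point of $\seq{x_n-x_{n+1}}$ equals $\vd$. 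Fixing a subsequence with $x_{n_k}-x_{n_k+1}\to\bar{a}$ and writing $\bar{b}:=v-\bar{a}$ for the corresponding limit of $\nu_{n_k}-\nu_{n_k+1}$, passing to the limit in the two firm-nonexpansiveness inequalities gives $\norm{\bar{a}}^2\le\innerprod{v}{\bar{a}}$ and $\norm{\bar{b}}^2\le\innerprod{v}{\bar{b}}$.

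The heart of the argument, and the main obstacle, is to identify $\bar{a}=\vd$. The inequalities just obtained, together with $\bar{a}+\bar{b}=v$, only force $\innerprod{\bar{a}}{\bar{b}}\ge0$ and so do not determine $\bar{a}$ on their own; the missing constraints must come from the recession structure captured by Propositions~\ref{prop:vp_vd_rec_fcn} and~\ref{prop:vp_vd_v}. The mechanism I would use exploits that $(x_n,\nu_n)$ lies in the graph of $\partial f$ and that $\tilde{x}_n$ together with $\mu_n:=(2x_n-s_n)-\tilde{x}_n$ lies in the graph of $\partial g$, with $\nu_n+\mu_n=s_n-s_{n+1}\to v$. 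Evaluating the subgradient inequalities for $f$ and $g$ in the recession direction $-\vd\in\recession(\cldom f)\cap\recession(\cldom g)$ (Proposition~\ref{prop:vp_vd_v}\ref{prop:vp_vd_v:iv}) and adding them, the tight identity $\recession{f}(-\vd)+\recession{g}(-\vd)=-\norm{\vd}^2$ of Proposition~\ref{prop:vp_vd_rec_fcn} would bound $\innerprod{s_n-s_{n+1}}{\vd}$ below by $\norm{\vd}^2$, matching its limit. The aim would then be to transfer this displacement-level tightness down to the consecutive differences, obtaining $\innerprod{x_n-x_{n+1}}{\vd}\to\norm{\vd}^2$; the dual computation in the direction $-\vp$ would give $\innerprod{\nu_n-\nu_{n+1}}{\vp}\to\norm{\vp}^2$, hence $\innerprod{x_n-x_{n+1}}{\vp}\to0$ by orthogonality $\vp\perp\vd$ (Proposition~\ref{prop:vp_vd_v}\ref{prop:vp_vd_v:v}). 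Feeding these back into $\norm{x_n-x_{n+1}}^2\le\innerprod{s_n-s_{n+1}}{x_n-x_{n+1}}$ would yield $\limsup_n\norm{x_n-x_{n+1}}\le\norm{\vd}$, and the equality case of Cauchy--Schwarz would force $\bar{a}=\vd$, with the projection characterization of Corollary~\ref{cor:vp_vd} confirming the identification.

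The delicate point I expect to fight with is exactly this transfer step: the recession identity of Proposition~\ref{prop:vp_vd_rec_fcn} naturally controls the full displacement $\nu_n+\mu_n$ rather than the increments $\nu_n-\nu_{n+1}$, so I would need to show that the orthogonal cross term $\innerprod{\nu_n}{\vd}$ does not leak, i.e.\ that it converges and its increments vanish. Establishing this convergence is where closedness of the graph of $\partial f$ in finite dimensions and the static results of Section~\ref{subsec:static} would have to be combined carefully, and I anticipate it to be the crux of the entire proof.
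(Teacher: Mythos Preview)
Your reduction of the theorem to the single statement $x_n-x_{n+1}\to\vd$ via the algebraic identities from~\eqref{eqn:dra} and Fact~\ref{fact:min_displ_vec}\ref{fact:min_displ_vec:dlts} is correct and matches the paper. The gap is in how you propose to prove that single statement: the ``transfer step'' you flag as delicate is in fact unnecessary, because you are missing the one observation that makes the whole argument short.

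The paper's key move is to push Moreau's decomposition one step further than you do. You note that $\nu_n=\prox_{f^*}s_n$, but you do not compute $x_n-x_{n+1}$ itself. Using Moreau on both $\prox_f$ and $\prox_g$, together with $s_{n+1}=s_n+\tilde{x}_n-x_n$, one finds
\[
  x_n-x_{n+1}=\prox_{f^*}s_{n+1}+\prox_{g^*}(2x_n-s_n)\in\dom f^*+\dom g^*,
  \qquad
  \nu_n-\nu_{n+1}\in\dom f-\dom g,
\]
for every $n$. These inclusions immediately give the lower bounds $\norm{\vd}\le\underline{\lim}\,\norm{x_n-x_{n+1}}$ and $\norm{\vp}\le\underline{\lim}\,\norm{\nu_n-\nu_{n+1}}$, since $\vd$ and $\vp$ are the minimum-norm elements of the respective closures. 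For the matching upper bound, use the Pythagorean form of firm nonexpansiveness,
\[
  \norm{x_n-x_{n+1}}^2+\norm{\nu_n-\nu_{n+1}}^2\le\norm{s_n-s_{n+1}}^2,
\]
rather than the inner-product form you wrote; taking $\overline{\lim}$ and feeding in the second lower bound and $\norm{s_n-s_{n+1}}\to\norm{v}$ yields $\overline{\lim}\,\norm{x_n-x_{n+1}}^2\le\norm{v}^2-\norm{\vp}^2=\norm{\vd}^2$. Hence $\norm{x_n-x_{n+1}}\to\norm{\vd}$, and since the sequence lives in a closed convex set whose unique minimum-norm point is $\vd$, it converges to $\vd$. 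No recession inequalities, no control of $\innerprod{\nu_n}{\vd}$, and no cluster-point analysis beyond this last line are needed; Propositions~\ref{prop:vp_vd_rec_fcn} and~\ref{prop:vp_vd_v}\ref{prop:vp_vd_v:iv} play no role in the dynamic argument. Your proposed route through subgradient inequalities in the direction $-\vd$ would, as you suspected, get stuck precisely at controlling the unbounded quantity $\innerprod{\nu_n}{\vd}$.
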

\begin{proof}
Using Moreau's decomposition \cite[\Thm~14.3(ii)]{Bauschke:2017:book}, it is easy to show from~\eqref{eqn:dra} that for all $n\in\Nat$
\begin{subequations}
\begin{align}
  x_n - x_{n+1} &= \prox_{f^*} s_{n+1} + \prox_{g^*}(2x_n-s_n) \in \dom f^* + \dom g^* \label{eqn:diff_x_incl} \\
  \nu_n - \nu_{n+1} &= \prox_f s_{n+1} - \prox_g(2x_n-s_n) \in \dom f - \dom g. \label{eqn:diff_nu_incl}
\end{align}
\end{subequations}
From the definitions of $\vp$ and $\vd$, and the inclusions above, it follows that
\begin{subequations}
\begin{align}
  \norm{\vd} &\le \underline{\lim} \, \norm{x_n - x_{n+1}} \label{eqn:diff_x_bound} \\
  \norm{\vp} &\le \underline{\lim} \, \norm{\nu_n - \nu_{n+1}}. \label{eqn:diff_nu_bound}
\end{align}
\end{subequations}
Since $\prox_f$ is firmly nonexpansive \cite[\Prop~12.28]{Bauschke:2017:book}, \cite[\Def~4.1(i)]{Bauschke:2017:book} implies
\[
  \norm{s_n-s_{n+1}}^2 \ge \norm{x_n-x_{n+1}}^2 + \norm{\nu_n-\nu_{n+1}}^2, \quad \forall n\in\Nat.
\]
Taking the limit superior of the inequality above, we get
\begin{align*}
  \lim \, \norm{s_n-s_{n+1}}^2 &\ge \overline{\lim} \, \left( \norm{x_n-x_{n+1}}^2 + \norm{\nu_n-\nu_{n+1}}^2 \right) \\
  &\ge \overline{\lim} \, \norm{x_n-x_{n+1}}^2 + \underline{\lim} \, \norm{\nu_n-\nu_{n+1}}^2,
\end{align*}
and thus
\[
  \overline{\lim} \, \norm{x_n-x_{n+1}}^2 \le \lim \, \norm{s_n-s_{n+1}}^2 - \underline{\lim} \, \norm{\nu_n-\nu_{n+1}}^2 \le \norm{v}^2 - \norm{\vp}^2 = \norm{\vd}^2,
\]
where the second inequality follows from Fact~\ref{fact:min_displ_vec}\ref{fact:min_displ_vec:dlts} and \eqref{eqn:diff_nu_bound}, and the equality from \Prop~\ref{prop:vp_vd_v}\ref{prop:vp_vd_v:v}\&\ref{prop:vp_vd_v:vii}.
Combining the inequality above with \eqref{eqn:diff_x_bound} yields $\norm{x_n - x_{n+1}} \to \norm{\vd}$.
Using the inclusion in \eqref{eqn:diff_x_incl} and the fact that $\vd$ is the unique element of minimum norm in $\closure{\dom f^*+\dom g^*}$, it follows that $x_n-x_{n+1}\to\vd$; $\tilde{x}_n-\tilde{x}_{n+1}\to\vd$ and $\nu_n-\nu_{n+1}\to\vp$ then follow directly from \eqref{eqn:dra}, Fact~\ref{fact:min_displ_vec}\ref{fact:min_displ_vec:dlts}, and \Prop~\ref{prop:vp_vd_v}\ref{prop:vp_vd_v:vii}.
\end{proof}

\begin{corollary}
Let $s_0\in\mcf{H}$ and $\seq{x_n,\tilde{x}_n,\nu_n}$ be the sequences generated by~\eqref{eqn:dra}.
Then
\[
  -\tfrac{1}{n}(x_n,\tilde{x}_n,\nu_n)\to(\vd,\vd,\vp).
\]
\end{corollary}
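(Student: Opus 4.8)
The plan is to derive this as a straightforward consequence of Theorem~\ref{thm:limits} via a telescoping argument combined with the Cesàro mean theorem. First I would restate the conclusion of Theorem~\ref{thm:limits} in terms of forward differences: since $x_n-x_{n+1}\to\vd$, we equivalently have $x_{n+1}-x_n\to-\vd$, and likewise $\tilde{x}_{n+1}-\tilde{x}_n\to-\vd$ and $\nu_{n+1}-\nu_n\to-\vp$.

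Next I would telescope. For each $n\ge 1$ we may write $x_n = x_0 + \sum_{k=0}^{n-1}(x_{k+1}-x_k)$, so that $\tfrac{1}{n}x_n = \tfrac{1}{n}x_0 + \tfrac{1}{n}\sum_{k=0}^{n-1}(x_{k+1}-x_k)$. The first term vanishes as $n\to\infty$, while the second is the Cesàro average of the sequence of forward differences. Since a convergent sequence in a finite-dimensional Hilbert space has Cesàro averages converging to the same limit, the second term tends to $-\vd$. Hence $\tfrac{1}{n}x_n\to-\vd$, i.e.\ $-\tfrac{1}{n}x_n\to\vd$, and the identical computation applied to $\tilde{x}_n$ and $\nu_n$ yields $-\tfrac{1}{n}\tilde{x}_n\to\vd$ and $-\tfrac{1}{n}\nu_n\to\vp$.

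This argument is essentially routine, and I do not anticipate a genuine obstacle. The only point requiring a little care is the justification of the Cesàro step, which I would phrase componentwise (each coordinate of the finite-dimensional vector sequence is a convergent real sequence, to which the classical Cesàro mean theorem applies) or, equivalently, invoke directly in its vector-valued form. No assumptions beyond those already present in Theorem~\ref{thm:limits} are needed.
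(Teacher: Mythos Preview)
Your proposal is correct and mirrors the paper's own proof: both invoke Theorem~\ref{thm:limits} and then the Ces\`aro mean theorem applied to the telescoping sum of consecutive differences. Your write-up simply makes the telescoping step explicit where the paper compresses it into a single sentence.
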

\begin{proof}
Follows directly from \Thm~\ref{thm:limits} and the fact that, given a sequence $\seq{a_n}$ in $\mcf{H}$, $a_n\to a$ implies $\tfrac{1}{n}\sum_{i=1}^n a_n \to a$.
\end{proof}

The results above show that the strong infeasibility certificates $\vp$ and $\vd$ can be obtained as the limits of sequences constructed from the Douglas-Rachford iterates.

\section{Constrained Minimization of a Quadratic Function}\label{sec:quadratic}

Consider the following convex optimization problem:
\begin{equation}\label{eqn:quad_primal}
  \begin{array}{ll}
    \underset{z\in\setB}{\rm minimize} & \half \innerprod{z}{Qz} + \innerprod{q}{z} \\
    \textrm{subject to} & Az\in\setC,
  \end{array}
\end{equation}
with $Q\colon\mcf{H}_1\to\mcf{H}_1$ a monotone self-adjoint linear operator, $q\in\mcf{H}_1$, $A\colon\mcf{H}_1\to\mcf{H}_2$ a linear operator, and $\setB$ and $\setC$ nonempty closed convex subsets of $\mcf{H}_1$ and $\mcf{H}_2$, respectively.
The objective function of the problem is convex, continuous, and Fr\'echet differentiable \cite[\Prop~17.36(i)]{Bauschke:2017:book}.

The following proposition is a direct extension of \cite[\Prop~3.1]{Banjac:2019}.
\begin{proposition}\label{prop:infeas}~
\begin{enumerate}[label=(\roman*)]
  \item If there exists a pair $(\bar{\lambda},\bar{\mu})\in\mcf{H}_1\times\mcf{H}_2$ such that $\bar{\lambda}+A^*\bar{\mu}=0$ and $\support{\setB}(\bar{\lambda})+\support{\setC}(\bar{\mu})<0$, then problem~\eqref{eqn:quad_primal} is strongly infeasible.
  \item If there exists a $\bar{z}\in\recession{\setB}$ such that $Q\bar{z}=0$, $A\bar{z}\in\recession{\setC}$, and $\innerprod{q}{\bar{z}}<0$, then the dual of problem~\eqref{eqn:quad_primal} is strongly infeasible.
\end{enumerate}
\end{proposition}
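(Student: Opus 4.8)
The plan is to recognize problem~\eqref{eqn:quad_primal} as an instance of~\eqref{eqn:primal} and then extract strong infeasibility of the primal and dual directly from the definitions in Section~\ref{sec:dra}. Writing $\phi(z)=\half\innerprod{z}{Qz}+\innerprod{q}{z}$ for the smooth part of the objective, I would take
\[
  f = \phi + \indicator{\setB}
  \qquad\text{and}\qquad
  g = \indicator{\setC}\circ A,
\]
so that, at the level of domains, $\dom f = \setB$ and $\dom g = \{z\in\mcf{H}_1 \mid Az\in\setC\} = A^{-1}\setC$. With this identification, \eqref{eqn:quad_primal} is strongly infeasible exactly when $0\notin\closure{\setB - A^{-1}\setC}$, and its dual~\eqref{eqn:dual} is strongly infeasible exactly when $0\notin\closure{\dom f^* + \dom g^*}$. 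Each part then amounts to producing a hyperplane that strictly separates $0$ from the relevant set, with the separating direction read off from the hypotheses.

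For part~(i), I would verify the separation by hand using $\bar{\lambda}=-A^*\bar{\mu}$. Fix any $b\in\setB$ and any $a\in A^{-1}\setC$; since $Aa\in\setC$ we have $\innerprod{\bar{\mu}}{Aa}\le\support{\setC}(\bar{\mu})$, whence $\innerprod{\bar{\lambda}}{a}=-\innerprod{\bar{\mu}}{Aa}\ge-\support{\setC}(\bar{\mu})$. Combining this with $\innerprod{\bar{\lambda}}{b}\le\support{\setB}(\bar{\lambda})$ gives
\[
  \innerprod{\bar{\lambda}}{b-a} \le \support{\setB}(\bar{\lambda})+\support{\setC}(\bar{\mu}) < 0 .
\]
Thus the continuous functional $\innerprod{\bar{\lambda}}{\cdot}$ is bounded above by a strictly negative constant on $\setB - A^{-1}\setC$, hence on its closure, whereas it vanishes at $0$; therefore $0\notin\closure{\setB - A^{-1}\setC}$, which is primal strong infeasibility.

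For part~(ii), I would first convert dual strong infeasibility into a statement about recession functions. Applying \cite[\Prop~13.49]{Bauschke:2017:book} to $f^*$ and $g^*$ (as was already done for $f$ and $g$ in the proof of \Prop~\ref{prop:vp_vd_rec_fcn}) yields $\support{\dom f^*}=\recession{f}$ and $\support{\dom g^*}=\recession{g}$, so the separation requirement $0\notin\closure{\dom f^* + \dom g^*}$ reduces to producing a single direction $w$ with $\recession{f}(w)+\recession{g}(w)<0$. The remaining work is to compute the two recession functions on the problem data: additivity of the recession function together with a direct computation for the quadratic gives $\recession{f}(w)=\innerprod{q}{w}$ when $Qw=0$ and $w\in\recession{\setB}$, and $+\infty$ otherwise; while $g=\indicator{A^{-1}\setC}$ gives $\recession{g}(w)=0$ when $Aw\in\recession{\setC}$ and $+\infty$ otherwise, using $\recession{A^{-1}\setC}=\{w\mid Aw\in\recession{\setC}\}$. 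Taking $w=\bar{z}$, the hypotheses $\bar{z}\in\recession{\setB}$, $Q\bar{z}=0$, and $A\bar{z}\in\recession{\setC}$ make both terms finite and force $\recession{f}(\bar{z})+\recession{g}(\bar{z})=\innerprod{q}{\bar{z}}<0$, which supplies the desired direction and hence dual strong infeasibility.

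The main obstacle I anticipate is the composition with $A$: identifying $\dom g=A^{-1}\setC$ and, more delicately, handling $\support{A^{-1}\setC}$ and $\recession{A^{-1}\setC}$ without imposing a constraint qualification on the pair $(A,\setC)$. The point I would emphasize is that the argument never invokes the exact conjugate formula for $(\indicator{\setC}\circ A)^*$ (which in general does require such a qualification): in part~(i) it uses only the one-sided bound $\innerprod{A^*\bar{\mu}}{a}\le\support{\setC}(\bar{\mu})$ for $a\in A^{-1}\setC$, and in part~(ii) only the elementary identity $\recession{A^{-1}\setC}=A^{-1}(\recession{\setC})$, which holds as soon as $A^{-1}\setC\neq\emptyset$ (implicit in $g$ being proper). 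Everything else is routine: the recession-function computation for the quadratic-plus-indicator $f$ and the additivity $\recession{(\phi+\indicator{\setB})}=\recession{\phi}+\recession{\indicator{\setB}}$.
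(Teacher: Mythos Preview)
The paper does not actually prove this proposition; it simply introduces it as ``a direct extension of \cite[\Prop~3.1]{Banjac:2019}'' and moves on, so there is no in-paper argument to compare against line by line. Your direct separation argument is a valid proof, but you should be aware that you are casting~\eqref{eqn:quad_primal} as an instance of~\eqref{eqn:primal} via a \emph{different} splitting than the paper: you take $f=\phi+\indicator{\setB}$ and $g=\indicator{\setC}\circ A$ on $\mcf{H}_1$, whereas immediately after the proposition the paper adopts the lifted pair~\eqref{eqn:quad_fcn} on $\mcf{H}_1\times\mcf{H}_2$. Since ``strongly infeasible'' in this paper is defined relative to a chosen pair $(f,g)$, the two readings are not automatically identical; in particular your $g$ is not even proper when $A^{-1}\setC=\emptyset$, a case your framework cannot accommodate but which the paper's splitting handles without issue. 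Fortunately your separation arguments transplant almost verbatim to the lifted setting: in part~(i) use $(\bar\lambda,\bar\mu)$ to separate $0$ from $(\setB\times\setC)-\{(z,Az):z\in\mcf{H}_1\}$ (the cross term $\innerprod{\bar\lambda+A^*\bar\mu}{z}$ vanishes), and in part~(ii) use $(\bar z,A\bar z)$ together with the recession functions of~\eqref{eqn:quad_fcn} that are computed later in the proof of \Prop~\ref{prop:quad_dra}. So the fix is cosmetic rather than structural.

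One small overstatement to clean up: your claim that $\recession{f}(w)=+\infty$ ``otherwise'' (and the appeal to additivity of recession functions that underlies it) is not correct in general---when $w\in\recession{\setB}$ one gets $\recession{f}(w)=\support{\setB}(Qw)+\tfrac12\innerprod{w}{Qw}+\innerprod{q}{w}$, which can be finite even if $Qw\neq0$. This does not harm the proof, since you only use the finite branch $Q\bar z=0$, $\bar z\in\recession{\setB}$, where your formula $\recession{f}(\bar z)=\innerprod{q}{\bar z}$ is indeed exact.
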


Observe that \eqref{eqn:quad_primal} is an instance of problem~\eqref{eqn:primal} with $f\colon\mcf{H}_1\times\mcf{H}_2 \to \left]-\infty,+\infty\right]$ and $g\colon\mcf{H}_1\times\mcf{H}_2 \to \left]-\infty,+\infty\right]$ given by
\begin{subequations}\label{eqn:quad_fcn}
\begin{align}
  f(z,y) &= \indicator{\setB}(z) + \indicator{\setC}(y) \label{eqn:quad_fcn:f} \\
  g(z,y) &= \half \innerprod{z}{Qz} + \innerprod{q}{z} + \indicator{Az=y}(z,y), \label{eqn:quad_fcn:g}
\end{align}
\end{subequations}
where $\indicator{Az=y}$ denotes the indicator function of the set $\{(z,y)\in\mcf{H}_1\times\mcf{H}_2 \mid Az=y\}$.
Due to \Lem~\ref{lem:g_conjugate}, $f^*\colon\mcf{H}_1\times\mcf{H}_2 \to \left]-\infty,+\infty\right]$ and $g^*\colon\mcf{H}_1\times\mcf{H}_2 \to \left]-\infty,+\infty\right]$ are given by
\begin{subequations}\label{eqn:quad_fcn_conj}
\begin{align}
  f^*(\lambda,\mu) &= \support{\setB}(\lambda) + \support{\setC}(\mu) \\
  g^*(\lambda,\mu) &= \half \innerprod{\lambda+A^*\mu-q}{Q^\dagger (\lambda+A^*\mu-q)} + \indicator{\range{Q}}(\lambda+A^*\mu-q).
\end{align}
\end{subequations}
We next consider iteration~\eqref{eqn:dra} applied to the problem of minimizing the sum of the functions given in \eqref{eqn:quad_fcn}.

When $\setB=\mcf{H}_1$ and $\setC$ has some additional structure, problem~\eqref{eqn:quad_primal} reduces to the one considered in \cite{Banjac:2019}, where the Douglas-Rachford algorithm (which is equivalent to the alternating direction method of multipliers) was shown to generate certificates of primal and dual strong infeasibility.
This result was generalized in \cite{Banjac:2021} to the case where $\setC$ is an arbitrary nonempty closed convex set.
We next show that these results are a direct consequence of our analysis presented in Section~\ref{sec:displacement}.
We use the notation
\[
  v=(v',v''),
  \qquad
  \vp=(\vp',\vp''),
  \qquad
  \vd=(\vd',\vd''),
\]
where the first and second components are elements of $\mcf{H}_1$ and $\mcf{H}_2$, respectively.

\begin{proposition}\label{prop:quad_dra}
Let $f\colon\mcf{H}_1\times\mcf{H}_2 \to \left]-\infty,+\infty\right]$ and $g\colon\mcf{H}_1\times\mcf{H}_2 \to \left]-\infty,+\infty\right]$ be given by \eqref{eqn:quad_fcn}, and $(z_n,y_n)$ and $(\lambda_n,\mu_n)$ be the Douglas-Rachford iterates corresponding to $x_n$ and $\nu_n$ in \eqref{eqn:dra}, respectively.
Then
\begin{enumerate}[label=(\roman*)]
  \item \label{prop:quad_dra:vd}   $(-\vd',-\vd'') = \big( \project{\recession{\setB}}(-v'), \project{\recession{\setC}}(-v'') \big)$.
  \item \label{prop:quad_dra:vp}   $(-\vp',-\vp'') = \big( \project{\polar{(\recession{\setB})}}(-v'), \project{\polar{(\recession{\setC})}}(-v'') \big)$.
  \item \label{prop:quad_dra:dlt}  $(z_n-z_{n+1},y_n-y_{n+1},\lambda_n-\lambda_{n+1},\mu_n-\mu_{n+1}) \to (\vd',\vd'',\vp',\vp'')$.
  \item \label{prop:quad_dra:Qvd}  $Q\vd'=0$.
  \item \label{prop:quad_dra:Avd}  $A\vd'=\vd''$.
  \item \label{prop:quad_dra:qvd}  $\innerprod{q}{-\vd'} = -\norm{\vd}^2$.
  \item \label{prop:quad_dra:Atvp} $\vp'+A^*\vp''=0$.
  \item \label{prop:quad_dra:supp} $\support{\setB}(-\vp')+\support{\setC}(-\vp'')=-\norm{\vp}^2$.
\end{enumerate}
\end{proposition}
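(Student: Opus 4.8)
The plan is to instantiate the general static and dynamic results from Section~\ref{sec:displacement} for the specific $f$ and $g$ in~\eqref{eqn:quad_fcn}, then extract each component identity by computing the relevant recession cones and recession functions explicitly. The key observation is that $f=\indicator{\setB}\oplus\indicator{\setC}$ is separable across $\mcf{H}_1\times\mcf{H}_2$, so $\cldom f=\setB\times\setC$ and hence $\recession(\cldom f)=\recession{\setB}\times\recession{\setC}$ and $\polar{(\recession(\cldom f))}=\polar{(\recession{\setB})}\times\polar{(\recession{\setC})}$, with projections onto product sets acting componentwise. Parts~\ref{prop:quad_dra:vd} and~\ref{prop:quad_dra:vp} then follow immediately from Corollary~\ref{cor:vp_vd} by reading off the two components; and part~\ref{prop:quad_dra:dlt} is just Theorem~\ref{thm:limits} rewritten with $(z_n,y_n)=x_n$ and $(\lambda_n,\mu_n)=\nu_n$.

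\textbf{For the structural identities.} Parts~\ref{prop:quad_dra:Qvd}--\ref{prop:quad_dra:Avd} should come from Proposition~\ref{prop:vp_vd_v}\ref{prop:vp_vd_v:iv}, which gives $-\vd\in\recession(\cldom g)$. I would compute $\cldom g$: since $g$ is the sum of a finite convex quadratic and the indicator of the graph $\{(z,y)\mid Az=y\}$, we have $\dom g=\{(z,y)\mid Az=y\}$, a closed subspace, so $\recession(\cldom g)=\{(z,y)\mid Az=y\}$ itself. Thus $-\vd=(-\vd',-\vd'')$ satisfies $A\vd'=\vd''$, giving~\ref{prop:quad_dra:Avd}. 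For~\ref{prop:quad_dra:Qvd}, I would use the other inclusion: by~\ref{prop:vp_vd_v:iii}, $-\vp\in\recession(\cldom g^*)$, and reading off $g^*$ from~\eqref{eqn:quad_fcn_conj} shows $\cldom g^*=\{(\lambda,\mu)\mid \lambda+A^*\mu-q\in\range Q\}$; alternatively, and more directly, $Q\vd'=0$ should follow by combining $-\vd\in\recession(\cldom g)$ with the recession-function identity of Proposition~\ref{prop:vp_vd_rec_fcn}, since the quadratic part $\half\innerprod{z}{Qz}$ has recession function $+\infty$ off $\ker Q$, forcing $\vd'\in\ker Q$, i.e.\ $Q\vd'=0$. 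Part~\ref{prop:quad_dra:qvd} then follows by evaluating the second identity of Proposition~\ref{prop:vp_vd_rec_fcn}, $\recession{f}(-\vd)+\recession{g}(-\vd)=-\norm{\vd}^2$, using that $\recession{f}(-\vd)=\recession(\indicator{\setB}\oplus\indicator{\setC})(-\vd)=\indicator{\recession{\setB}}(-\vd')+\indicator{\recession{\setC}}(-\vd'')=0$ (by~\ref{prop:quad_dra:vd}), and that on $\ker Q\cap\{Az=y\}$ the recession function of $g$ reduces to the linear term $\innerprod{q}{-\vd'}$.

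\textbf{For the dual certificate identities.} Parts~\ref{prop:quad_dra:Atvp}--\ref{prop:quad_dra:supp} are the mirror image. By Proposition~\ref{prop:vp_vd_v}\ref{prop:vp_vd_v:iii}, $-\vp\in\recession(\cldom f^*)\cap\recession(-\cldom g^*)$; the inclusion in $\recession(-\cldom g^*)$, together with the computation of $\cldom g^*=\{(\lambda,\mu)\mid \lambda+A^*\mu-q\in\range Q\}$ above, forces $-\vp'-A^*(-\vp'')\in\recession(\range Q)=\range Q$; but the recession-cone condition must hold with $q$ removed, giving the homogeneous relation $\vp'+A^*\vp''\in\range Q$, and combining with $\vp'\perp\recession$-data (via orthogonality Proposition~\ref{prop:vp_vd_v}\ref{prop:vp_vd_v:v} and the kernel-range decomposition of the self-adjoint $Q$) should pin it down to $\vp'+A^*\vp''=0$, which is~\ref{prop:quad_dra:Atvp}. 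Finally,~\ref{prop:quad_dra:supp} is the first identity of Proposition~\ref{prop:vp_vd_rec_fcn}, $\recession{f^*}(-\vp)+\recession{g^*}(\vp)=-\norm{\vp}^2$, where $\recession{f^*}(-\vp)=\support{\setB}(-\vp')+\support{\setC}(-\vp'')$ (since $f^*=\support{\setB}\oplus\support{\setC}$ has recession function equal to itself, being positively homogeneous) and $\recession{g^*}(\vp)=0$ because $\vp'+A^*\vp''=0$ places $\vp$ in the kernel direction where $g^*$'s recession function vanishes.

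\textbf{The main obstacle} I expect is the careful computation of $\recession{g}$ and $\recession{g^*}$ and the verification that the quadratic-plus-linear structure interacts correctly with the subspace constraint $Az=y$ and the range/kernel decomposition of $Q$ through $Q^\dagger$; in particular, confirming that $\recession{g^*}(\vp)=0$ rather than merely $\le 0$ requires using $Q\vd'=0$, $\vp'+A^*\vp''=0$, and the orthogonality $\innerprod{\vp}{\vd}=0$ in concert, and making sure the pseudoinverse term $\half\innerprod{\cdot}{Q^\dagger\cdot}$ contributes no asymptotic growth along the admissible directions. The algebraic bookkeeping linking $v$, $\vp$, $\vd$ across the two factors via Corollary~\ref{cor:vp_vd} and Proposition~\ref{prop:vp_vd_rec_fcn} is routine but must be done component by component.
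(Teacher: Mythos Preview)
Your plan for parts~\ref{prop:quad_dra:vd}--\ref{prop:quad_dra:dlt} matches the paper exactly. For \ref{prop:quad_dra:Qvd}--\ref{prop:quad_dra:qvd} you arrive at the paper's argument after a detour: the paper simply computes $\recession{g}(\bar z,\bar y)=\innerprod{q}{\bar z}+\indicator{\ker Q}(\bar z)+\indicator{Az=y}(\bar z,\bar y)$ and reads off all three conclusions at once from the second identity in Proposition~\ref{prop:vp_vd_rec_fcn}, whereas you first try the recession-cone inclusion $-\vd\in\recession(\cldom g)$ (which gives only \ref{prop:quad_dra:Avd}) before falling back on the recession-function route for \ref{prop:quad_dra:Qvd} and \ref{prop:quad_dra:qvd}. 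That is fine, just slightly longer.

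There is, however, a genuine gap in your plan for~\ref{prop:quad_dra:Atvp}. From $-\vp\in\recession(-\cldom g^*)$ and $\cldom g^*=\{(\lambda,\mu)\mid \lambda+A^*\mu-q\in\range Q\}$ you correctly extract only $\vp'+A^*\vp''\in\range Q$. Your proposal to upgrade this to $\vp'+A^*\vp''=0$ via ``orthogonality and the kernel--range decomposition of $Q$'' does not work: writing $\innerprod{\vp}{\vd}=\innerprod{\vp'}{\vd'}+\innerprod{\vp''}{\vd''}=\innerprod{\vp'+A^*\vp''}{\vd'}$ (using $A\vd'=\vd''$), this inner product is automatically zero because $\vp'+A^*\vp''\in\range Q=(\ker Q)^\perp$ while $\vd'\in\ker Q$, so the orthogonality $\innerprod{\vp}{\vd}=0$ carries no extra information. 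The recession-cone route alone cannot close this.

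The fix is exactly what the paper does, and you already have the ingredient in your treatment of~\ref{prop:quad_dra:supp}: compute $\recession{g^*}$ directly. Since $\recession{g^*}=\support{\dom g}$ and $\dom g$ is the graph subspace $\{(z,Az)\}$, one gets $\recession{g^*}(\bar\lambda,\bar\mu)=\sup_z\innerprod{\bar\lambda+A^*\bar\mu}{z}=\indicator{\{0\}}(\bar\lambda+A^*\bar\mu)$. Plugging into the first identity of Proposition~\ref{prop:vp_vd_rec_fcn},
\[
-\norm{\vp}^2=\support{\setB}(-\vp')+\support{\setC}(-\vp'')+\indicator{\{0\}}(\vp'+A^*\vp''),
\]
finiteness of the left side forces the indicator to vanish, giving~\ref{prop:quad_dra:Atvp} immediately and~\ref{prop:quad_dra:supp} in the same line. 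Your worry about showing $\recession{g^*}(\vp)=0$ ``rather than merely $\le 0$'' is also misplaced: $\recession{g^*}$ here is an indicator, so it is either $0$ or $+\infty$, and no appeal to $Q\vd'$ or orthogonality is needed.
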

\begin{proof}
\ref{prop:quad_dra:vd}\&\ref{prop:quad_dra:vp}:
Follow from \Cor~\ref{cor:vp_vd} with $\cldom f = \setB\times\setC$.

\ref{prop:quad_dra:dlt}:
Follows from \Thm~\ref{thm:limits}.

\ref{prop:quad_dra:Qvd}\&\ref{prop:quad_dra:Avd}\&\ref{prop:quad_dra:qvd}:
Using the identity $\recession{f}=\support{\dom f^*}$ \cite[\Prop~13.49]{Bauschke:2017:book}, it is easy to show that the recession functions of those in \eqref{eqn:quad_fcn} are given by
\begin{align*}
  \recession{f}(\bar{z},\bar{y}) &= \indicator{\recession{\setB}}(\bar{z}) + \indicator{\recession{\setC}}(\bar{y}) \\
  \recession{g}(\bar{z},\bar{y}) &= \innerprod{q}{\bar{z}} + \indicator{\ker{Q}}(\bar{z}) + \indicator{Az=y}(\bar{z},\bar{y}).
\end{align*}
Due to \Prop~\ref{prop:vp_vd_rec_fcn}, we have
\[
  -\norm{\vd}^2 = \indicator{\recession{\setB}}(-\vd') + \indicator{\recession{\setC}}(-\vd'') + \innerprod{q}{-\vd'} + \indicator{\ker{Q}}(-\vd') + \indicator{Az=y}(-\vd',-\vd''),
\]
which implies
\[
  Q\vd' = 0,
  \qquad
  A\vd'=\vd'',
  \qquad
  \innerprod{q}{-\vd'} = -\norm{\vd}^2.
\]

\ref{prop:quad_dra:Atvp}\&\ref{prop:quad_dra:supp}:
Using the identity $\recession{f^*}=\support{\dom f}$, it is easy to show that the recession functions of those in \eqref{eqn:quad_fcn_conj} are given by
\begin{align*}
  \recession{f^*}(\bar{\lambda},\bar{\mu}) &= \support{\setB}(\bar{\lambda}) + \support{\setC}(\bar{\mu}) \\
  \recession{g^*}(\bar{\lambda},\bar{\mu}) &= \indicator{\{0\}}(\bar{\lambda}+A^*\bar{\mu}).
\end{align*}
Due to \Prop~\ref{prop:vp_vd_rec_fcn}, we have
\[
  -\norm{\vp}^2 = \support{\setB}(-\vp') + \support{\setC}(-\vp'') + \indicator{\{0\}}(\vp'+A^*\vp''),
\]
which implies
\[
  \vp'+A^*\vp''=0,
  \qquad
  \support{\setB}(-\vp') + \support{\setC}(-\vp'')=-\norm{\vp}^2.
  \qedhere
\]
\end{proof}

\Prop~\ref{prop:infeas} and \Prop~\ref{prop:quad_dra} imply that, if $\vp$ is nonzero, then problem~\eqref{eqn:quad_primal} is strongly infeasible, and similarly, if $\vd$ is nonzero, then its dual is strongly infeasible.
When $\setB=\mcf{H}_1$, the expressions in \Prop~\ref{prop:quad_dra} reduce to those given in \cite{Banjac:2019,Banjac:2021} since $\recession{\setB}=\mcf{H}_1$ implies $\vd'=v'$, $\vp'=0$, $\support{\setB}(-\vp')=0$, and $\norm{\vp}=\norm{\vp''}$.

\section*{Acknowledgements}
The author would like to thank Angeliki Kamoutsi and Liviu Aolaritei for helpful discussions, and John Lygeros for editorial suggestions on the manuscript.
This project has received funding from the European Research Council (ERC) under the European Union's Horizon 2020 research and innovation programme grant agreement OCAL, No.\ 787845.

\appendices
\section{Supporting Results}

\begin{lemma}\label{lem:closures}
Let $\setC\subseteq\mcf{H}$ and $\setD\subseteq\mcf{H}$ be nonempty sets.
Then
\[
  \closure{\setC-\setD} = \closure{\closure{\setC}-\closure{\setD}}.
\]
\end{lemma}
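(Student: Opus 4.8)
The plan is to prove the two inclusions separately, exploiting the fact that in a metric space the set difference $\setC-\setD=\lbrace c-d \mid c\in\setC,\, d\in\setD \rbrace$ behaves continuously with respect to the closure operation. The easy direction is $\subseteq$: since $\setC\subseteq\closure{\setC}$ and $\setD\subseteq\closure{\setD}$, monotonicity of the Minkowski difference gives $\setC-\setD\subseteq\closure{\setC}-\closure{\setD}$, and applying the (monotone) closure operator to both sides yields $\closure{\setC-\setD}\subseteq\closure{\closure{\setC}-\closure{\setD}}$.

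For the reverse inclusion, I would first reduce to an ``un-closed'' statement in order to avoid circularity. By monotonicity and idempotence of the closure operator, it suffices to establish $\closure{\setC}-\closure{\setD}\subseteq\closure{\setC-\setD}$; taking closures of both sides then gives $\closure{\closure{\setC}-\closure{\setD}}\subseteq\closure{\closure{\setC-\setD}}=\closure{\setC-\setD}$, which is exactly what is wanted. To prove the reduced inclusion, I would fix an arbitrary $x\in\closure{\setC}-\closure{\setD}$ and write $x=c-d$ with $c\in\closure{\setC}$ and $d\in\closure{\setD}$. Since $\mcf{H}$ is finite-dimensional, hence a metric space in which closures admit a sequential characterization, there exist sequences $(c_n)$ in $\setC$ and $(d_n)$ in $\setD$ with $c_n\to c$ and $d_n\to d$. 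Then $c_n-d_n\in\setC-\setD$ for every $n$, and continuity of subtraction gives $c_n-d_n\to c-d=x$, so $x\in\closure{\setC-\setD}$.

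Both steps are elementary, so I do not anticipate a genuine obstacle. The only points requiring mild care are the reduction of the reverse inclusion to its un-closed form via idempotence of the closure (to keep the argument non-circular), and the use of the sequential description of closure, which is legitimate precisely because we work in the finite-dimensional Hilbert space $\mcf{H}$.
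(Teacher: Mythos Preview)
Your proof is correct and follows essentially the same approach as the paper's: both establish the key inclusion $\closure{\setC}-\closure{\setD}\subseteq\closure{\setC-\setD}$ via the sequential characterization of closures, and then sandwich this with the trivial inclusion $\setC-\setD\subseteq\closure{\setC}-\closure{\setD}$ before applying the closure operator. One minor remark: the sequential description of closure is valid in any metric space, so finite-dimensionality of $\mcf{H}$ is not actually needed here.
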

\begin{proof}
We first show that
\begin{equation}\label{eqn:incl}
  \closure{\setC}-\closure{\setD} \subseteq \closure{\setC-\setD}.
\end{equation}
Let $x\in\closure{\setC}-\closure{\setD}$, that is, $x=c-d$ for some $c\in\closure{\setC}$ and $d\in\closure{\setD}$.
Then, there exist sequences $\seq{c_n}$ in $\setC$ and $\seq{d_n}$ in $\setD$ such that $c_n\to c$ and $d_n\to d$.
Hence, the sequence $\seq{x_n}$, given by $x_n=c_n-d_n$, lies in $\setC-\setD$ and $x_n\to x$, thus, $x\in\closure{\setC-\setD}$.

Now, we have
\[
  \setC-\setD \subseteq \closure{\setC}-\closure{\setD} \subseteq \closure{\setC-\setD},
\]
where the first inclusion is straightforward as $\setC\subseteq\closure{\setC}$ and $\setD\subseteq\closure{\setD}$, and the second is given in \eqref{eqn:incl}.
Taking the closures of these sets gives
\[
  \closure{\setC-\setD} \subseteq \closure{\closure{\setC}-\closure{\setD}} \subseteq \closure{\setC-\setD},
\]
which implies $\closure{\setC-\setD}=\closure{\closure{\setC}-\closure{\setD}}$.
\end{proof}

\begin{lemma}\label{lem:rec_dom}
Let $f\colon\mcf{H}\to\left]-\infty,+\infty\right]$ be a proper lower semicontinuous convex function.
Then
\[
  \polar{\left( \recession(\cldom f) \right)} = \cldom\support{\cldom f} = \cldom(\recession{f^*}) \subseteq \recession(\cldom f^*).
\]
\end{lemma}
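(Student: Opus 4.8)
The plan is to read the claimed chain as three separate assertions and dispatch them in the order that is most convenient: the middle equality first, then the outer equality, and finally the inclusion. Throughout I write $C=\cldom f$, which is a nonempty closed convex set since $f$ is proper and $\dom f$ is convex.

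For the middle equality $\cldom\support{\cldom f}=\cldom(\recession{f^*})$, I would apply \cite[\Prop~13.49]{Bauschke:2017:book} to $f^*$ (itself proper lower semicontinuous convex), obtaining $\recession{f^*}=\support{\dom f^{**}}=\support{\dom f}$, where $f^{**}=f$ because $f$ is proper lsc convex. Since a support function depends only on the closure of its generating set, $\support{\dom f}=\support{\cldom f}$. Hence $\recession{f^*}$ and $\support{\cldom f}$ are the \emph{same} function on $\mcf{H}$, so they share the same domain and therefore the same closure of the domain, which is exactly the middle equality.

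For the outer equality $\polar{(\recession{C})}=\cldom\support{C}$, the key step is to show $\recession{C}=\polar{(\dom\support{C})}$ for the nonempty closed convex set $C$. The inclusion ``$\subseteq$'' is immediate: if $y\in\recession{C}$ and $\support{C}(u)<+\infty$, then $\innerprod{x+ty}{u}\le\support{C}(u)$ for all $x\in C$ and $t\ge 0$, and letting $t\to+\infty$ forces $\innerprod{y}{u}\le 0$. For ``$\supseteq$'' I would use the representation of a closed convex set by its support function, namely $x\in C$ iff $\innerprod{x}{u}\le\support{C}(u)$ for every $u$: if $\innerprod{y}{u}\le 0$ on $\dom\support{C}$, then the same holds for $ty$ with $t\ge 0$, whence $\innerprod{x+ty}{u}\le\support{C}(u)$ for all $u$ and all $x\in C$, so $x+ty\in C$ and $y\in\recession{C}$. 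Taking polars and using that $\dom\support{C}$ is a convex cone (the domain of a sublinear function), the bipolar theorem gives $\polar{(\recession{C})}=\closure{\dom\support{C}}=\cldom\support{C}$.

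Finally, for the inclusion $\cldom(\recession{f^*})\subseteq\recession(\cldom f^*)$, I would establish the general statement $\dom(\recession{h})\subseteq\recession(\cldom h)$ for proper lsc convex $h$ and then pass to closures, using that the recession cone of a closed convex set is closed. Concretely, if $\recession{h}(y)<+\infty$, then $h(x+ty)\le h(x)+t\,\recession{h}(y)<+\infty$ shows $x+ty\in\dom h$ for all $x\in\dom h$ and $t\ge 0$; approximating an arbitrary point of $\cldom h$ by points of $\dom h$ yields $\cldom h+y\subseteq\cldom h$, i.e.\ $y\in\recession(\cldom h)$. Specializing to $h=f^*$ closes the chain. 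I expect the reverse inclusion in the outer equality to be the main obstacle, since it is exactly where closedness of $C$ is used and is the reason the closures in $\cldom$ cannot be dropped.
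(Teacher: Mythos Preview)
Your argument is correct. For the middle equality and the final inclusion your reasoning coincides with the paper's: the paper also invokes \cite[\Prop~13.49]{Bauschke:2017:book} for the former, and for the latter it runs exactly your argument (show $\dom(\recession{f^*})\subseteq\recession(\cldom f^*)$ from the defining inequality of the recession function, then extend to $\cldom f^*$ by approximation and close up using that recession cones of closed sets are closed).

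The only real difference is the outer equality $\polar{(\recession C)}=\cldom\support{C}$. The paper simply cites \cite{Adly:2004}, whereas you derive it from first principles by proving $\recession C=\polar{(\dom\support{C})}$ and then applying the bipolar theorem together with the fact that $\dom\support{C}$ is already a convex cone. Your route is self-contained and makes transparent where closedness of $C$ enters (the support-function characterization $x\in C\Leftrightarrow(\forall u)\,\innerprod{x}{u}\le\support{C}(u)$ in the ``$\supseteq$'' direction), at the cost of a few extra lines. The citation, on the other hand, keeps the lemma short but hides this dependence. Either way the content is the same standard fact.
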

\begin{proof}
The first equality can be found in \cite{Adly:2004} and the second is \cite[\Prop~13.49]{Bauschke:2017:book}.
To show the last inclusion, let $d\in\dom(\recession{f^*})$.
Then $\recession{f^*}(d) < +\infty$, which implies
\begin{align*}
  (\forall y\in\dom f^*) \: f^*(y+d) < +\infty \quad \Longleftrightarrow & \quad (\forall y\in\dom f^*) \: y+d\in\dom f^* \\
  \Longrightarrow & \quad (\forall z\in\cldom f^*) \: z+d\in\cldom f^* \\
  \Longleftrightarrow & \quad d\in\recession(\cldom f^*).
\end{align*}
To prove the implication above, let $z\in\cldom f^*$.
Then, there exists a sequence $\seq{y_n}$ in $\dom f^*$ such that $y_n\to z$.
Hence, the sequence $\seq{y_n+d}$ lies in $\dom f^*$ and converges to $z+d$, thus, $z+d\in\cldom f^*$.

Therefore, it follows that $\dom(\recession{f^*})\subseteq\recession(\cldom f^*)$.
Moreover, since $\recession(\cldom f^*)$ is always closed, taking the closures of these sets gives $\cldom(\recession{f^*})\subseteq\recession(\cldom f^*)$.
\end{proof}

\begin{lemma}\label{lem:g_conjugate}
Let $g\colon\mcf{H}_1\times\mcf{H}_2 \to \left]-\infty,+\infty\right]$ be given by \eqref{eqn:quad_fcn:g}.
Then its Fenchel conjugate $g^*\colon\mcf{H}_1\times\mcf{H}_2 \to \left]-\infty,+\infty\right]$ is given by
\[
  g^*(\lambda,\mu) = \half \innerprod{\lambda+A^*\mu-q}{Q^\dagger (\lambda+A^*\mu-q)} + \indicator{\range{Q}}(\lambda+A^*\mu-q).
\]
where $Q^\dagger$ is the Moore-Penrose inverse of $Q$.
\end{lemma}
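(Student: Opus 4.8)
The plan is to compute $g^*$ directly from the definition of the Fenchel conjugate and then reduce the computation to conjugating a possibly degenerate quadratic form. Writing
\[
  g^*(\lambda,\mu) = \sup_{(z,y)\in\mcf{H}_1\times\mcf{H}_2} \left( \innerprod{z}{\lambda} + \innerprod{y}{\mu} - g(z,y) \right),
\]
I would first use the term $\indicator{Az=y}$ to eliminate $y$: the objective is $-\infty$ off the constraint $y=Az$, so the supremum is effectively taken over $y=Az$ only. Substituting and using $\innerprod{Az}{\mu} = \innerprod{z}{A^*\mu}$ collapses the two variables into one, and setting $w := \lambda + A^*\mu - q$ leaves
\[
  g^*(\lambda,\mu) = \sup_{z\in\mcf{H}_1} \left( \innerprod{z}{w} - \half\innerprod{z}{Qz} \right),
\]
which is exactly the Fenchel conjugate of the quadratic form $z\mapsto\half\innerprod{z}{Qz}$ evaluated at $w$.

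The remaining and main task is to evaluate this conjugate for a merely monotone self-adjoint $Q$, where the form is degenerate and the supremum need not be finite. I would split on whether $w\in\range Q$. Since $Q$ is self-adjoint on a finite-dimensional space, $\range Q = (\ker Q)^\perp$; if $w\notin\range Q$ then $w$ has a nonzero component $d\in\ker Q$, and choosing $z=td$ gives $\innerprod{z}{Qz}=0$ while $\innerprod{z}{w}=t\norm{d}^2\to+\infty$, so the supremum is $+\infty$, matching the term $\indicator{\range Q}(w)$. If instead $w\in\range Q$, the unconstrained maximization of the concave objective is resolved by the stationarity condition $Qz=w$, which is consistent precisely because $w\in\range Q$; taking the particular solution $z=Q^\dagger w$ yields the value $\innerprod{Q^\dagger w}{w} - \half\innerprod{Q^\dagger w}{QQ^\dagger w}$.

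To finish I would invoke the standard properties of the Moore-Penrose inverse: $Q^\dagger$ is self-adjoint when $Q$ is, and $QQ^\dagger$ is the orthogonal projection onto $\range Q$, so $QQ^\dagger w=w$ for $w\in\range Q$. The value above then simplifies to $\half\innerprod{w}{Q^\dagger w}$, giving
\[
  g^*(\lambda,\mu) = \half\innerprod{w}{Q^\dagger w} + \indicator{\range Q}(w), \qquad w = \lambda + A^*\mu - q,
\]
as claimed. The main obstacle is this degenerate-quadratic computation: one must argue carefully that the kernel direction is the only source of unboundedness and that $z=Q^\dagger w$ is a legitimate maximizer, rather than naively inverting $Q$. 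Everything else is routine substitution and adjoint bookkeeping.
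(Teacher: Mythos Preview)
Your proposal is correct and follows essentially the same route as the paper: eliminate $y$ via the constraint $y=Az$, pass the inner product through the adjoint, and reduce to the conjugate of a (possibly degenerate) convex quadratic in $z$. The only difference is cosmetic: the paper packages the quadratic conjugate step by citing \cite[\Prop~13.23(iii) \& \Prop~17.36(iii)]{Bauschke:2017:book}, whereas you spell out the $\range Q$ versus $\ker Q$ split and the Moore--Penrose identities directly.
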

\begin{proof}
The Fenchel conjugate of the quadratic function $h\colon\mcf{H}_1\to \left]-\infty,+\infty\right[ \colon z \mapsto \half \innerprod{z}{Qz} + \innerprod{q}{z}$ is given by
\[
  h^*(\lambda) = \sup_{z\in\mcf{H}_1} \left( \innerprod{\lambda}{z} - \half \innerprod{z}{Qz} - \innerprod{q}{z} \right) = \half \innerprod{\lambda-q}{Q^\dagger (\lambda-q)} + \indicator{\range{Q}}(\lambda-q),
\]
which follows directly from \cite[\Prop~13.23(iii) \& \Prop~17.36(iii)]{Bauschke:2017:book}.
Thus, the Fenchel conjugate of $g$ is given by
\begin{align*}
  g^*(\lambda,\mu) &= \sup_{(z,y)\in\mcf{H}_1\times\mcf{H}_2} \left( \innerprod{\lambda}{z} + \innerprod{\mu}{y} - \half \innerprod{z}{Qz} - \innerprod{q}{z} - \indicator{Az=y}(z,y) \right) \\
  &= \sup_{z\in\mcf{H}_1} \left( \innerprod{\lambda+A^*\mu}{z} - \half \innerprod{z}{Qz} - \innerprod{q}{z} \right) \\
  &= h^*(\lambda+A^*\mu).
  \qedhere
\end{align*}
\end{proof}

\bibliography{refs}

\end{document}